\documentclass[12pt]{article}

\usepackage[usenames]{color} 
\usepackage{amssymb} 
\usepackage{amsmath} 
\usepackage[utf8]{inputenc} 
\usepackage{comment}
\usepackage{hyperref}
\usepackage{bbm}
\usepackage{epsfig}
\usepackage{subfigure}
\usepackage{enumerate}

\includecomment{problem}

\includecomment{solution}

\textheight=600pt \textwidth=440pt \oddsidemargin=10pt \evensidemargin=10pt \topmargin=14pt
\headheight=8pt
\parindent=0pt
\parskip=2pt

\def\ed { \stackrel{d}{=} }
\def\E { {\mathbb{E}}}
\def\P { {\mathbb{P}}}

\def\Z { {\mathbb{Z}}}

\def\XX{ {\mathcal{X} }}

\def\1{\mathbbm{1} }

\newcommand{\Xn}{ (X_1, X_2, \cdots)}
\newcommand{\Xnvar}{ (X_n, n \ge 1)}

\def\euler#1#2{\genfrac{\langle}{\rangle}{0pt}{}{#1}{#2}}
\def \endprf{\hfill {\vrule height6pt width6pt depth0pt}\medskip} 
\newenvironment{proof}{\noindent {\bf Proof} }{\endprf\par}

\newtheorem{theorem}{Theorem}[section]

\newtheorem{corollary}[theorem]{Corollary}

\numberwithin{equation}{section}

\providecommand{\customgenericname}{}
\newcommand{\newcustomtheorem}[2]{%
  \newenvironment{#1}[1]
  {%
   \renewcommand\customgenericname{#2}%
   \renewcommand\theinnercustomgeneric{##1}%
   \innercustomgeneric
  }
  {\endinnercustomgeneric}
}

\newcustomtheorem{customthm}{Theorem}
\newcustomtheorem{customlemma}{Lemma}

\title{Stationary $1$-dependent Counting Processes:\\ from Runs to Bivariate Generating Functions}
\author{Jim Pitman, Zhiyi You}

\begin{document}
	
\date{May 17, 2021}

\maketitle

\section*{Abstract}

We give a formula for the bivariate generating function of a stationary $1$-dependent counting process in terms of its run probability generating function, with a probabilistic proof.
The formula reduces to the well known bivariate generating function of the Eulerian distribution in the case of descents of a sequence of indepependent and identically distributed 
random variables. The formula is compared with alternative expressions from the theory of determinantal point processes and the combinatorics of sequences.

\section*{Keywords}

One-dependent process, bivariate generating function, run probability generating function, Eulerian numbers, dependence structures, determinantal point process.

\section{Introduction} \label{sec:intro}

\subsection{Counting one-dependent processes}

A discrete time stochastic process $\Xn$ is said to be \textit{$1$-dependent} if 
\begin{equation}
    (X_1, \ldots, X_{m-1} ) \text{ is independent of } (X_{m + 1}, \ldots, X_{ m + n } )
\end{equation}
for all positive integers $m$ and $n$. In contrast to a Markov chain, this independence requires no knowledge of current position. 
This dependence structure has been widely investigated in probability theory \cite{borodin2010adding, holroyd2016finitely,holroyd2017one}, and as a tool in statistics \cite{kamps1989chebyshev} and queuing systems \cite{henderson2001regenerative, sigman1990one}. 
Examples of $1$-dependent processes are provided by \textit{$2$-block factors}  \cite{ibragimov1971independent} generated by a function of two successive terms in an independent sequence. 
But not all $1$-dependent processes can be constructed this way: Aaronson et al. \cite{aaronson1989algebraic} explicitly gave a two-parameter family of $1$-dependent processes which cannot be expresses as $2$-block factors. 
Other examples of this kind arise in the theory of random colorings of integers developed by Holroyd and Liggett \cite{holroyd2016finitely,holroyd2017one}.

Here, we restrict attention to $1$-dependent processes $\Xn$ which are also \textit{stationary}, i.e. for all positive integers $n$, \begin{equation}
    (X_1, X_2, \ldots, X_n ) \ed (X_2, X_3, \ldots, X_{ n + 1 } ).
\end{equation}
In the case of an independent and identically distributed (\textit{i.i.d.}) sequence, the distribution of the count $S_n(A) = \sum_{k = 1}^n 1(X_k \in A )$ is binomial with parameters $n$ and $\P(X_n \in A)$, for any measurable subset $A$ of the
state space of the sequence. We describe here a bivariate generating function which determines the distribution of this counting variable $S_n(A)$ for any stationary $1$-dependent process $\Xn$.

\subsection{Bivariate generating functions}

Following the work of de Moivre on the distribution of the number of spots in a number of die rolls, the encoding of a sequence by its \textit{generating function} was exploited by Euler \cite{euler1letter} and many subsequent authors for combinatorial enumeration
\cite{goulden2004combinatorial,flajolet2001analytic}.
To describe the distribution of an integer-valued random variable, Laplace \cite{laplace1820theorie} introduced the \textit{probability generating function} \cite{deakin1981development}.
For a sequence of non-negative-integer-valued random variables $S_n$, let $Q_{S_n}$ denote the probability generating function of $S_n$:
\begin{equation}
    \label{eqn:pgf}
    Q_{S_n}(z):= \E z^{S_n} = \sum_{k=0}^\infty \P( S_n = k ) z^k,
\end{equation}
and let $Q(z,v)$ be the \textit{bivariate generating function} of distributions of $S_n$
\begin{equation}
    \label{eqn:bgf}
    Q(z,v): = \sum_{n \ge 0 } \sum_{k \ge 0} \P( S_n = k) z^k  v^n   = \sum_{n \ge  0} Q_{S_n}(z) v^n,
\end{equation}
for all $z, v$ such that the summation converges, including $|z| \le 1$ and $|v| < 1$.
This bivariate generating function determines the distribution of $S_n$ for every $n$ by extraction of the coefficient of $z^k v^n$ from $Q(z, v)$:
\begin{equation}
    \P (S_n = k) = [z^k v^n] Q(z, v), \quad n, k = 0, 1, 2, \cdots
\end{equation}
In our set up for counting processes, $S_n:= X_1 + \cdots + X_n$, where
$\Xnvar$ is an indicator sequence, so each count $S_n$ takes values in $\{0, 1, 2, \cdots, n\}$, and the
series \eqref{eqn:bgf} is absolutely convergent for $|z v | < 1$.
See e.g. \cite[Chapter III]{flajolet2001analytic} for further background on bivariate generating functions.

\subsection{Run probability generating functions} \label{subsec:run_pgf}

For an indicator sequence $\Xnvar$, define its \textit{$0$-run probabilities}

\begin{equation}
    q_0 := 1 \text{ and } q_n := \P( S_n = 0 ) = \P( X_1 = X_2 = \cdots = 0 ), \quad n = 1, 2, \ldots
\end{equation}
and the associated \textit{$0$-run probability generating function}
\begin{align}
\label{eqn:0run_pgf}
    Q(v) &:= \sum_{n = 0}^\infty q_n v^n = \sum_{n = 0}^\infty \P( S_n = 0) v^n = Q(0,v).
\end{align}

The \textit{$1$-run probability} sequence can be similarly defined, treated as the 0-run probability sequence for the \textit{dual} indicator sequence $(\hat{X_n} := 1 - X_n, n \ge 1)$, with counts $\hat{S_n} = n - S_n$:

\begin{equation}
    p_0 := 1 \text{ and } p_n := \P( S_n = n ) = \P( X_1 = X_2 = \cdots = 1 ), \quad n = 1, 2, \ldots
\end{equation}

The associated \textit{$1$-run probability generating function} is then for $0 \le v < 1$
\begin{align}
\label{eqn:1run_pgf}
    P(v) &:= \sum_{n = 0}^\infty p_n v^n = \sum_{n = 0}^\infty \P( S_n = n ) v^n = \hat Q(0,v) = \lim_{z \to 0} Q(z^{-1}, zv),
\end{align}
where $\hat Q(z, v)$ is the bivariate generating function of $\hat S_n$, and the last equality is by dominated convergence as $z \to 0$, using the evaluation
for $|zv| < 1$:
\begin{align}
\hat Q(z, v) &= \sum_{n \ge 0 } \sum_{k=0}^n \P( S_n = n-k ) z^k  v^n \\
        &= \sum_{n \ge 0} \sum_{k=0}^n \P( S_n = k ) z^{-k}  (zv)^n = Q(z^{-1}, zv)  .
\end{align}

In our case of a stationary $1$-dependent sequence of indicator variables $\Xnvar$, it is known \cite[Chapter 7.4]{polishchuk2005quadratic}\cite[Theorem 1]{aaronson1989algebraic} that the 
distribution of $S_n = X_1 + \cdots + X_n$, is uniquely determined by its sequence of $1$-run probabilities, or just as well by its sequence of $0$-run probabilities, through a determinantal formula
for the probability function of the random vector $(X_1, \ldots, X_n)$.
Our main result, presented in Section \ref{sec:main_result}, gives a formula for the bivariate generating function $Q(z,v)$ of distributions of $S_n$ in this case, which is simpler than might be expected from this determinantal formula.
The rest of this paper is organized as follows.
\begin{itemize}
    \item Section \ref{sec:descents} 
shows how the Eulerian bivariate generating function is obtained from our result in the case of descents.
    \item Section \ref{sec:examples} displays the bivariate generating function of some other stationary $1$-dependent processes.
    \item Section \ref{sec:determinantal} verifies our result from the perspective of determinantal point processes.
    \item Section \ref{sec:goulden_jackson} makes connection with a combinatorial result in Goulden and Jackson \cite{goulden2004combinatorial} and provides Corollary \ref{thm:counting_cor_2bf} which is suitable for counting a particular pattern in $2$-block factors.
    \item Section \ref{sec:comparison} compares our formula for the bivariate generating function in the stationary $1$-dependent case to 
similar formulae for  exchangeable or renewal processes, which are either known or easily derived from known results.
\end{itemize}

\section{Main result} \label{sec:main_result}

\begin{theorem} \label{thm:bgf_stat_1dep}
For a stationary $1$-dependent indicator sequence $\Xnvar$, the bivariate generating function $Q(z,v)$ of distributions of its partial sums $S_n$  
is determined either by the $0$-run probability generating function $Q(v)$, or by the $1$-run probability generating function $P(v)$, via the
formulae
\begin{equation}
    \label{eqn:bgf_stat_1dep}
    Q(z,v) 
    = \frac{ Q ( (1 - z) v  ) }{ 1 - z v Q ( (1 - z ) v ) }
    = \frac{ P ( - (1 - z) v  ) }{ 1 - v P ( - (1 - z ) v ) }.
\end{equation}
\end{theorem}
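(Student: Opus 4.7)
The plan is to expand $z^{S_n}$ by inclusion--exclusion and exploit the $1$-dependence to factor the resulting joint probabilities. Writing $z^{X_j} = 1 + (z-1)X_j$ and multiplying over $j \le n$ gives
\begin{equation*}
\E[z^{S_n}] = \sum_{A \subseteq \{1,\ldots,n\}} (z-1)^{|A|}\, \P(X_j = 1 \text{ for all } j \in A).
\end{equation*}
Decompose each $A$ into its maximal runs of consecutive integers $B_1, \ldots, B_m$ with sizes $b_1, \ldots, b_m$. Between any two successive runs there is at least one index $j$ with $X_j$ unconstrained, so the $1$-dependence condition applied at that index peels off one block at a time inductively, and stationarity identifies $\P(X_j = 1, j \in B_i)$ with the $1$-run probability $p_{b_i}$. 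Hence
\begin{equation*}
\E[z^{S_n}] = \sum_A (z-1)^{|A|} \prod_{i=1}^{m(A)} p_{b_i(A)}.
\end{equation*}

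\textbf{Assembling the bivariate series.} Weighting by $v^n$ and summing over $n$, I would encode each admissible $(n,A)$ as a composition $(g_0; b_1, g_1, \ldots, g_{m-1}, b_m; g_m)$ with $g_0, g_m \ge 0$, internal $g_i \ge 1$, and $n = \sum b_i + \sum g_j$. The two boundary gaps each sum geometrically to $1/(1-v)$, the $m-1$ internal gaps to $v/(1-v)$ each, and each block to $\sum_{b \ge 1} p_b ((z-1)v)^b = P((z-1)v) - 1$. Summing the resulting geometric series in $m \ge 0$ and using $1/(1-v) - v/(1-v) = 1$ to simplify should collapse everything to
\begin{equation*}
Q(z,v) = \frac{P((z-1)v)}{1 - v\, P((z-1)v)} = \frac{P(-(1-z)v)}{1 - v\, P(-(1-z)v)},
\end{equation*}
which is the second of the two claimed expressions.

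\textbf{The first identity and the main obstacle.} For the $0$-run formula I see two routes. Direct: redo the expansion starting from $z^{X_j} = z - (z-1)\hat X_j$, which replaces each $p_b$ by $q_b$ and each $v$ in the gap factors by $zv$, producing $Q((1-z)v)/(1 - zv\,Q((1-z)v))$ by the same bookkeeping. Slick: apply what has just been proved to the dual sequence $\hat X_n = 1 - X_n$ (whose $1$-run PGF is $Q$) to get $\hat Q(z,v) = Q(-(1-z)v)/(1 - v\,Q(-(1-z)v))$, then substitute into the relation $Q(z,v) = \hat Q(z^{-1}, zv)$ from the excerpt and simplify using $-(1 - z^{-1})zv = (1-z)v$. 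The main obstacle is the factorization $\P(X_j = 1, j \in A) = \prod_i p_{b_i}$ across all $m$ blocks simultaneously: a careful induction on $m$, invoking the $1$-dependence condition at each inter-block gap, is required, together with some care to keep the boundary-versus-internal gap indexing straight when the geometric sums are assembled. Once that step is secure, the rest is routine formal power series manipulation.
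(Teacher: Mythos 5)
Your argument is correct, but it is a genuinely different proof from the one in the paper. The paper introduces an auxiliary i.i.d.\ Bernoulli$(z)$ marking sequence (van Dantzig's method of marks), interprets $Q_{S_n}(z)$ as the probability that every success among the first $n$ trials is marked, and conditions on the index of the first mark; $1$-dependence enters exactly once, to split the process at that index, yielding the renewal-type recursion $Q(z,v)=Q((1-z)v)+zv\,Q((1-z)v)\,Q(z,v)$ and hence the $Q$-formula first, with the $P$-formula then deduced via the involution of Corollary \ref{thm:involution_stat_1dep}. You instead expand $z^{S_n}=\prod_j(1+(z-1)X_j)$ to land on the correlation functions $\rho(A)=\P(X_j=1,\ j\in A)$, factor these over the maximal blocks of $A$ using $1$-dependence at each inter-block gap, and resum over compositions --- a cluster-expansion argument that produces the $P$-formula first. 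I checked the bookkeeping: with $w=(z-1)v$ and $\bar P=P(w)-1$, the $m\ge 1$ terms give $\frac{1}{(1-v)^2}\bar P^m\bigl(\frac{v}{1-v}\bigr)^{m-1}$ and the $m=0$ term gives $\frac1{1-v}$ (it does need separate treatment, since the two-boundary-gap encoding degenerates there), and the total collapses to $P(w)/(1-vP(w))$ as claimed; both of your routes to the $Q$-formula (the $z^{X_j}=z-(z-1)\hat X_j$ expansion, and duality via $Q(z,v)=\hat Q(z^{-1},zv)$) are sound. The block factorization itself is a clean induction on the number of blocks, applying the definition of $1$-dependence at $m=\max B_1+1$. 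What each approach buys: the paper's proof is shorter, purely probabilistic, and gives a memorable interpretation of the functional equation; yours makes no use of an auxiliary randomization, treats the two formulae symmetrically under the $0\leftrightarrow 1$ duality, and connects directly to the determinantal representation of Section \ref{sec:determinantal} (where $\rho$ appears) and to the Goulden--Jackson cluster sum $\bigl\{1-\sum_{j\ge1}(z-1)^{j-1}\gamma_j\bigr\}^{-1}$ of Section \ref{sec:goulden_jackson}, of which your expansion is essentially the probabilistic form.
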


The particular case $z=0$ of \eqref{eqn:bgf_stat_1dep} reduces to the following known result:
\begin{corollary} [Involution {\cite[Proposition 7.4]{borodin2010adding}}] \label{thm:involution_stat_1dep}
In the setting of the previous theorem, for any stationary $1$-dependent indicator sequence, the $0$-run generating function $Q(v)$ and the $1$-run generating function
$P(v)$ determine each other via the involution  of formal power series
\begin{equation}
    \label{eqn:0run_stat_1dep}
    Q(v)  = \frac{ P ( - v  ) }{ 1 - v P ( -  v ) }  ; \qquad P(v)  = \frac{ Q ( - v  ) }{ 1 - v Q ( -  v ) }   .
\end{equation}
\end{corollary}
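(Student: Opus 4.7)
The plan is to deduce Corollary 1.1 directly from Theorem 1.1 by specializing $z=0$ and then exploiting the symmetry between $Q$ and $P$ induced by the dual sequence $\hat X_n = 1 - X_n$.

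First, I would observe that setting $z=0$ in the bivariate identity \eqref{eqn:bgf_stat_1dep} of Theorem 1.1 collapses $Q(z,v)$ to $Q(0,v) = Q(v)$, by the definition \eqref{eqn:0run_pgf}. The right-hand expression becomes
\begin{equation*}
Q(v) = \frac{P(-v)}{1 - v\, P(-v)},
\end{equation*}
which is the first of the two relations in \eqref{eqn:0run_stat_1dep}. Note also that the middle expression of \eqref{eqn:bgf_stat_1dep} reduces to $Q(v)/(1-0) = Q(v)$, so no information is lost at $z = 0$ on the first equality, and the content is in the second equality.

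For the reverse direction, I would use the duality of the problem. The dual sequence $(\hat X_n)_{n \ge 1}$ is again a stationary $1$-dependent indicator sequence whose $0$-run generating function is exactly $P(v)$, and whose $1$-run generating function is $Q(v)$. Applying the identity just established to the dual process yields
\begin{equation*}
P(v) = \frac{Q(-v)}{1 - v\, Q(-v)},
\end{equation*}
which is the second relation of \eqref{eqn:0run_stat_1dep}. Alternatively, and without invoking duality, one can solve the first relation algebraically for $P(-v)$, obtaining $P(-v) = Q(v)/(1 + v\, Q(v))$, and then substitute $v \mapsto -v$ to recover the same formula.

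Finally, to justify the name ``involution'', I would briefly verify that the map $\Phi : F(v) \mapsto F(-v)/(1 - v F(-v))$ on formal power series with constant term $1$ satisfies $\Phi \circ \Phi = \mathrm{id}$, a direct calculation: if $G(v) := \Phi(F)(v)$, then $G(-v) = F(v)/(1 + v F(v))$, so
\begin{equation*}
\Phi(G)(v) = \frac{G(-v)}{1 - v\, G(-v)} = \frac{F(v)/(1+v F(v))}{1 - v F(v)/(1+v F(v))} = F(v).
\end{equation*}
There is no real obstacle here since everything reduces to a specialization of Theorem 1.1; the only subtle point is the symmetry argument ensuring the second formula, which is either a one-line duality observation or a one-line algebraic inversion.
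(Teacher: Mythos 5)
Your derivation is correct as a deduction of the corollary from the full statement of Theorem \ref{thm:bgf_stat_1dep}, but it runs in the opposite direction from the paper's proof, and this creates a latent circularity you should be aware of. You obtain the first relation of \eqref{eqn:0run_stat_1dep} by setting $z=0$ in the \emph{right} equality of \eqref{eqn:bgf_stat_1dep} (as you correctly note, the left equality degenerates to $Q(v)=Q(v)$ at $z=0$ and carries no information). The paper, however, proves only the left ($Q$-form) equality directly, via the method of marks; it then extracts the corollary from that left equality by the limit $P(v)=\lim_{z\to 0}Q(z^{-1},zv)$ already set up in \eqref{eqn:1run_pgf}, which turns $Q((1-z)v)/(1-zvQ((1-z)v))$ into $Q(-v)/(1-vQ(-v))$; and only \emph{afterwards} does it establish the right ($P$-form) equality, using the corollary. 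So if your proof were inserted into the paper as written, it would assume exactly the half of the theorem that the paper derives from the corollary. The gap is repairable with a tool you already use: the dual sequence $\hat X_n=1-X_n$ is again stationary $1$-dependent with the roles of $Q$ and $P$ swapped, and applying the left equality to $\hat X$ together with $Q(z,v)=\hat Q(z^{-1},zv)$ yields the right equality of \eqref{eqn:bgf_stat_1dep} independently of the corollary. Either make that step explicit, or simply derive the corollary from the left equality via the limit as the paper does.

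The remainder of your argument is sound and, in places, cleaner than the paper's. Your duality argument for the second relation of \eqref{eqn:0run_stat_1dep} is a genuine alternative to the paper's terse ``replace $v$ with $-v$'' (which in fact requires the small algebraic inversion you spell out in your parenthetical alternative), and your explicit verification that $\Phi\circ\Phi=\mathrm{id}$ on power series with constant term $1$ justifies the word ``involution,'' which the paper asserts without proof.
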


\begin{proof} (of Theorem \ref{thm:bgf_stat_1dep} and Corollary \ref{thm:involution_stat_1dep})

We will first prove the left equality in \eqref{eqn:bgf_stat_1dep}, rearranged as
\begin{equation}
    \label{qopened}
    Q(z,v) = Q( (1-z) v ) + z v \, Q( (1-z) v ) Q( z,v ),
\end{equation}
by establishing the corresponding identity of coefficients of powers of $v$, that is, 
\begin{align}
\label{qnid}
Q_{S_n}(z) &= [v^n] Q( (1-z)v ) + z \sum_{k=0}^{n-1} [v^k] Q( (1-z)v ) [v^{n-1-k}] Q(z, v).
\end{align}
Recall that $q_j := [v^j] Q(v)$, whence 
\begin{equation}
    [v^j] Q( (1-z)v ) = (1-z)^j [v^j] Q(v) = (1-z)^j q_j, \quad j = 0, 1, \ldots.
\end{equation}
So \eqref{qnid} for each $n = 1, 2, \ldots$, with $j = k-1$,  reduces to
\begin{align}
\label{eqn:method_of_marks_decomposition}    Q_{S_n}(z) &= (1-z)^n q_n + \sum_{k=1}^n \left((1-z)^{k-1} z \right) q_{k-1} Q_{S_{n-k}} (z),
\end{align}
which has the following interpretation.
For $0 \le z \le 1$, let $(Y_n, n \ge 1)$ be a sequence of i.i.d. Bernoulli($z$) random variables, also independent of $\Xnvar$. 
Employing van Dantzig's method of marks \cite{vandantzig1949methode},
treat $Y_n$ as a \textit{mark} on $X_n$: say the $n$-th item $X_n$ is {\em $z$-marked} if $Y_n=1$, and {\em non-$z$-marked} if $Y_n = 0$. By construction, $Q_{S_n}(z)$ is the probability that every success among the first $n$ trials is $z$-marked. 
In particular, if $z = 0$, every success in non-$z$-marked. Then the only way every success in the first $n$ trials is $z$-marked is if there are no successes. Hence $Q_{S_n}(0) = q_n$ is the probability of no successes in the first $n$ trials.
The identity \eqref{eqn:method_of_marks_decomposition}, decomposes the event that every success in the first $n$ trials is $z$-marked according to the value of $T_z := \min \{ n: Y_n = 1\}$, the index of the first $z$-mark. So
\begin{itemize}
\item $T_z$ has the geometric$(z)$ distribution $\P(T_z = k) = (1-z)^{k-1} z$ for $k = 1,2, \ldots$;
    \item On the event of probability $(1-z)^n$, that the first $z$-mark occurs at $T_z >n$, no trial among the first $n$ is allowed to be success, with probability $q_n$;
    \item On the event of probability $(1-z)^{k-1} z$, that the first $z$-mark occurs at $T_z = k$ for some $1 \le k \le n$, no trial among the first $k-1$ is allowed to be success, with probability $q_{k-1}$, and all success after the $k$-th (excluding the $k$-th) are $z$-marked, with probability $Q_{S_{n-k}}(z)$, with independence before and after the $k$-th trial by the assumption that $\Xnvar$ is $1$-dependent.
\end{itemize}
This proves the left equality of \eqref{eqn:bgf_stat_1dep}. To prove the right, it is easiest to prove Corollary \ref{thm:involution_stat_1dep}. Recall \eqref{eqn:1run_pgf},

\begin{equation}
    P(v) = \lim_{z \to 0} Q(z^{-1}, zv) = \lim_{z \to 0} \frac{ Q ( (z-1) v  ) }{ 1 - v Q ( (z-1) v ) } = \frac { Q(-v) } { 1 - vQ(-v) },
\end{equation}
which yields the $P$ identity in \eqref{eqn:0run_stat_1dep}. To see the $Q$ identity, simply replace $v$ with $-v$ in the last equation.

Lastly, the right equation of \eqref{eqn:bgf_stat_1dep} is obtained from the left one and the involution

\begin{align}
    Q(z,v) 
    &= \frac{ P ( - (1 - z) v  ) / (1 - (1-z) v P (- (1 - z) v)) }{ 1 - z v P ( - (1 - z ) v ) / (1 - (1-z) v P (- (1 - z) v)) }\\
    &= \frac{ P ( - (1 - z) v  ) }{ 1 - v P ( - (1 - z ) v ) }.
\end{align}

\end{proof}

\section{Application to descents} \label{sec:descents}

In this section, we present the example of \textit{Eulerian numbers}. We were led to the general formula for the bivariate generating function of
counts of a $1$-dependent indicator sequence by the algebraically simple form of the bivariate generating function of Eulerian numbers, whose probabilistic meaning is not
immediately obvious, but nicely explained by the above proof of Theorem \ref{thm:bgf_stat_1dep}.

It is well known that a large class of stationary $1$-dependent indicator sequences (though not all, see \cite{aaronson1989algebraic, burton19931}) may be constructed from an independent and identically distributed \textit{background sequence} $(Y_1, Y_2, \ldots)$, as \textit{two-block factors} 
\begin{equation}
    \label{eqn:2bf}
    X_n := 1 ( ( Y_n,Y_{n+1} ) \in B ),
\end{equation}
for some product-measurable subset $B$ of the space of pairs of $Y$-values, say $[0, 1]^2$ for $Y_i \sim $ Uniform$(0, 1)$.

An important example is provided by the sequence of {\em descents} $X_n:= 1(Y_n > Y_{n+1})$ for real-valued $Y_i$.
In particular, for $Y_i \sim $ Uniform$(0, 1)$ (or any continuous distribution) and $S_n := D_{n+1}$ counting the number of descents $Y_i > Y_{i+1}$ with $1 \le i \le n$:
\begin{equation} \label{eqn:0run_descents}
    \P( S_n = 0) = \P(S_n = n) = \P( Y_1 > \cdots > Y_{n+1} ) = \frac{1 }{ (n+1)! }   .
\end{equation}

So the run generating functions $Q(v)$ and $P(v)$ in this case are easily evaluated as
\begin{equation}
\label{eqn:0run_pgf_descents}
    Q(v)  = P(v) =  \sum_{n \ge  0} \frac{ v^n } { (n+1)!}  =  \frac{ e^v - 1 } {v }.  
\end{equation}

\subsection{Eulerian numbers}

The \textit{Eulerian numbers} $\euler{n}{k}$ are commonly defined  by the numbers of permutations of $[n] := \{1, 2, \cdots, n\}$ with exactly $k$ descents, i.e. $k$ adjacent pairs with first larger than the second \cite{graham1989concrete}.
So the count $\hat S_{n-1}$ of descents in a uniform random permutation of $[n]$ has the {\em Eulerian distribution}
\begin{equation}
    \P(\hat S_{n-1} = k) = \frac {1} {(n)!} \euler{n}{k}.
\end{equation}

Observe that this uniform permutation can be done by taking the \textit{ranks} of the i.i.d. background sequence $(Y_1, Y_2, \cdots, Y_n)$. Here, we say the rank of $Y_i$ is $k$ if and only if $Y_i$ is the $k$-th smallest among $Y_1, Y_2, \cdots Y_n$. Then, for $Y_i \sim $ Uniform$(0, 1)$, the ranks are almost surely a uniform permutation of $[n]$. Therefore, $\hat S_n$ has the same distribution as $S_n$ in \eqref{eqn:0run_descents}. Now, applying Theorem \ref{thm:bgf_stat_1dep} to the descents $X_n:= 1(Y_n > Y_{n+1})$ implies the following bivariate generating function
\begin{equation}
    \label{eqn:bgf_descents}
    Q(z,v) = \sum_{n=0}^\infty \sum_{k=0}^{n} \P(S_n = k) z^k v^n = \frac{ e^{(1-z) v } - 1 }{ v ( 1 - z e^{(1-z) v } )} = \frac{ e^v - e^{zv} }{ v ( e^{zv} - z e^v )},
\end{equation}
which is the classical bivariate generating function 
of the Eulerian numbers 
\cite{scoville1974generalized,comtet1974advanced,graham1989concrete,macmahon2004combinatory,hwang2020asymptotic}.

\section{More examples} \label{sec:examples}

To simplify displays,  we work in this section with the \textit{shifted run generating functions} 
\begin{equation} \label{eqn:shifted_run_pgf}
    \tilde Q(v) = 1 + v Q(v), \qquad \tilde P(v) = 1 + v P(v),
\end{equation}
and the \textit{shifted bivariate generating function} 
\begin{equation}
    \tilde Q(z,v) = 1 + v Q(z,v).
\end{equation}
For reasons which do not seem obvious, the algebraic form of the generating functions associated with a $1$-dependent
indicator sequence is typically simpler when they are shifted like this.
The shifted generating functions of some selected models are shown in the table below, with detailed explanation later.

There are some benefits for using the shifted generating functions.
Firstly, they are simpler, especially in the Eulerian case; secondly, for $2$-block factors, the shifted generating functions are actually `standard' in combinatorics, since $n$ is set to be the length of background sequence; 
thirdly, the formulae in Theorem \ref{thm:bgf_stat_1dep} and Corollary \ref{thm:involution_stat_1dep} are also slightly simplified:
the involution formula \eqref{eqn:0run_stat_1dep} becomes (see, e.g. \cite{macdonald1995symmetric,borodin2010adding} for earlier occurrences, and \cite{froberg1975determination, carlitz1976enumeration} where this formula was first discovered in the study of $2$-block factors)
\begin{equation} \label{eqn:0run_stat_1dep_var}
    \tilde Q(v) = \frac 1 {\tilde P(-v)}; \qquad \tilde P(v) = \frac 1 {\tilde Q(-v)},
\end{equation}
while our main theorem \eqref{eqn:bgf_stat_1dep} is re-written as
\begin{equation} \label{eqn:shifted_bgf_stat_1dep}
    \tilde Q(z, v) 
    = \frac {(1-z)\tilde Q((1-z)v)} {1 - z \tilde Q((1-z)v)} 
    = \frac {1-z} {\tilde P(-(1-z)v) - z}.
\end{equation}

\hspace{-0.5cm}
\begin{tabular}{ |c|p{4cm}|p{4cm}|p{5cm}|  }
 \hline
 \multicolumn{4}{|c|}{Table 1} \\
 \hline
 Model & Shifted $0$-run pgf $\tilde Q(v)$ & Shifted $1$-run pgf $\tilde P(v)$ & Shifted bgf $\tilde Q(z, v)$\\
 \hline
 Eulerian & $e^v$ & $e^v$ & $\frac{ 1-z }{ e^{-(1-z)v} -z }$\\
 I.i.d. & $\frac {1 + pv} {1 - (1-p)v}$ & $\frac {1 + v - pv} {1 - pv} $ & $\frac {1 + pv - pzv} {1 - v + pv - pzv}$ \\
 One-pair & $\frac {1 + pv} {1 - v + pv - pv^2 + p^2v^2}$ & $\frac {1 + v - pv - pv^2 + p^2v^2} {1 - pv}$ & $\frac {1 + pv -pvz} {1 - v + pv - pv^2 + p^2v^2 - pvz + pv^2z - p^2v^2z}$ \\
 Carries & $\left( 1 - \frac vb \right)^{-b}$ & $\left( 1 + \frac vb \right)^{b}$ & $\frac { 1-z } { \left( 1 - \frac {(1-z)v}b \right)^b - z }$\\
 Flipping & $\sqrt{\frac q p} \tan \left[ v \sqrt{ p q } \right.$ $\left.- \arctan \left( \frac q p \right) \right]$ &  $\left. \sqrt{\frac p q} \middle/ \tan \left[ -v \sqrt{ p q } \right. \right. $ $\left.- \arctan \left( \frac q p \right) \right]$ & $\frac { (1-z) \tan \left[ (1-z) v \sqrt{ p q } - \arctan \left( q / p \right) \right]  } { \sqrt {p / q} - z \tan \left[ (1-z) v \sqrt{ p q } - \arctan \left( q / p \right) \right]}$\\
 Non-2BF & $\frac 1 {1 - v + \alpha v^2 - \beta v^3}$ & $1 + v + \alpha v^2 + \beta v^3$ & $ \frac 1 {1 - v + \alpha (1-z) v^2 - \beta (1-z)^2 v ^3 }$\\
 \hline
\end{tabular}

We already discussed the \textit{Eulerian} model in Section \ref{sec:descents}. The rest of Table $1$ will be briefed here row by row. Usually, we only say how one of the run probability generating functions is obtained, since the other one and the bivariate generating function can then be found easily through \eqref{eqn:shifted_run_pgf}, \eqref{eqn:0run_stat_1dep_var} and \eqref{eqn:shifted_bgf_stat_1dep}.

\paragraph{Independent and identically distributed trials (I.i.d.)} \label{subsec:iid}

The classical example of i.i.d. Bernoulli($p$) trials is for sure an example of $1$-dependent sequence.

\paragraph{Indicator of two consecutive ones (One-pair)} \label{subsec:discrete_2bf}

Consider the simplest 2-block factors $X_n := 1(Y_n = Y_{n+1} = 1)$, where $(Y_n, n \ge 1)$ is i.i.d. Bernoulli($p$) trials. Its $1$-run probability generating function is easy to compute.

Considering the coefficient of $z^k v^n$ in its bivariate generating function gives the recursion
\begin{equation}
    q_{n, k} = (1-p) q_{n-1, k} + p q_{n-1, k-1} + p(1-p) (q_{n-2, k} - q_{n-2, k-1}), \qquad n \ge 2, k \ge 0, 
\end{equation}
where $q_{n, k} := [z^k v^n] Q(z, v) = [z^k v^{n+1}] \tilde Q(z, v) = \P(S_n = k)$ with initial values $q_{0, 0} = 1, q_{1, 0} = 1 - p^2, q_{1, 1} = p^2$ and convention $q_{n, k} = 0$ for $k > n$ or $k < 0$.

Setting $p = 1/2$ recovers the \textit{Fibonacci} sequence as its $0$-run probabilities:
\begin{equation}
    F_{n+2} =: 2^n q_{n, 0} = 2^{n-1} q_{n-1, 0} + 2^{n-2} q_{n-2, 0} = F_{n+1} + F_n,
\end{equation}
where $F_0 = F_1 = 1$ is the first two terms of the Fibonacci sequence we use here. This can also be interpreted as the chance of not getting any consecutive heads in a row of coin tosses, which has been recognized by many others, see \cite{mohanty1967coin, epstein1977theory, finkelstein1978fibonacci, honsberger1985second}.

\paragraph{Carries when adding a list of digits (Carries)} \label{subsec:carries}

Adding a list of digits using carries is discussed in \cite{borodin2010adding} as a stationary $1$-dependent process.
This example also falls into the category of $2$-block factors with i.i.d. Uniform($\{0, 1, \cdots, b-1\}$)'s as its background sequence, and $B = \{(x, y) : b > x > y \ge 0\}$.
Its $0$-run probabilities and generating function are explicitly given in \cite{borodin2010adding}.

\paragraph{Edge flipping on the integers (Flipping)} \label{subsec:edge_flipping}

Chung and Graham \cite{chung2012edge} introduced the following discrete time model of a random pattern in $\{0, 1\} ^V$ indexed by the vertex set $V$ of a finite simple graph $(V, E)$: pick an edge uniformly at random from $E$, then the pattern is updated by replacing its values on the two vertices joined with the picked edge by $11$ with probability $p$ and by $00$ otherwise, where the choices of edges and update of values on vertices are assumed to be all independent of the others, for some $0 < p < 1$.

They offered an analysis of discrete-time edge flipping on an $n$-cycle, which can be generalized in terms of a stationary continuous-time ($1$-dependent indicator) process indexed by the integers.

In short, we may sample its stationary distribution in the following manner:

\begin{itemize}
    \item first, generate a sequence $(U_n, n \in Z)$ of i.i.d. Uniform $(0, 1)$'s, which works as the time of last update on the edge $\{ n, n+1\}$. That is to say, if $U_n > U_{n-1}$, then the last update on edge $\{n, n+1\}$ happened later than the one on $\{ n-1, n \}$;
    \item secondly, generate a sequence $(W_n, n \in \Z)$ of i.i.d. Bernoulli $(p)$'s, independent of $(U_n, n \in Z)$, which stands for whether the last update on the edge $\{ n, n+1\}$ is $11$ or $00$.\
    \item lastly,
    \begin{equation}
        X_n := 1(U_n > U_{n-1}) W_n + 1(U_n < U_{n-1}) W_{n-1}, \quad n \in \Z.
    \end{equation}
\end{itemize}

This is apparently a stationary $2$-block factor. Its shifted $0$-run probability generating function is given in \cite[Theorem 6]{chung2012edge}.

\paragraph{A non-$2$-block-factor example (Non-2BF)}

Aaronson, Gilat, Keane and de Valk \cite{aaronson1989algebraic} first discovered this family of non-$2$-block-factor stationary $1$-dependent indicator processes. In short, they forbid the appearance of three consecutive ones, hence the $1$-run probability generating functions are as simple as quadratic functions. Note that not all value pairs $(\alpha, \beta)$ make this process not a $2$-block factor -- only some work, while some others do not yield stationary $1$-dependent processes at all. See \cite[Fig. 2]{aaronson1989algebraic}.

\section{Determinantal representation} \label{sec:determinantal}

Any indicator process can be treated 
as a point process by regarding the indicated events as points. It was shown in \cite[Theorem 7.1]{borodin2010adding} that any $1$-dependent point process on a segment of $\Z$ is a \textit{determinantal} process, as discussed further in \cite{daley2003introduction, daley2008introduction, borodin2010adding}.

Given a finite set $\XX$, a point process on $\XX$ is a probability measure $\P$ on $2 ^ \XX$.  
Its {\em correlation function} is the function of subsets $A \subseteq \XX$ 
 defined by $\rho(A) := \P(S : S \supseteq A)$. A point process is said to be \textit{determinantal} with \textit{kernel} $K(x, y)$ if

\begin{equation}
    \rho(A) = \det ( K(x, y) )_{x, y \in A},
\end{equation}
where the right hand side is the determinant of the $|A| \times |A|$ matrix with $K(x, y)$ on its $(x, y)$-th entry for $x, y \in A$. Now, we may state the following theorem from \cite{borodin2010adding}.

\begin{theorem}[Theorem 7.1 \cite{borodin2010adding}] \label{thm:1dep_det_kernel}
    Every $1$-dependent point process on a segment of $\Z$ is determinantal with kernel
    \begin{equation}
        K(x, y) = \sum_{r = 1}^{y-x+1} (-1) ^{r-1} \sum_{x = l_0 < l_1 < \cdots < l_r = y+1} \prod_{k=1}^r \rho([l_{k-1}, l_k) \cap \Z), \quad x \le y,
    \end{equation}
    $K(x, y) = -1$ for $x = y + 1$, and $K(x, y) = 0$ for $x \ge y + 2$.
\end{theorem}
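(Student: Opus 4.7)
The plan is to prove the determinantal identity $\rho(A) = \det(K(x, y))_{x, y \in A}$ in two stages: first, a factorization argument reduces to the case where $A$ is an interval of consecutive integers; second, a direct Leibniz expansion of the determinant settles that case.

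For the factorization, decompose $A$ into its maximal runs of consecutive integers, $A = I_1 \sqcup \cdots \sqcup I_m$ with $\min I_{j+1} - \max I_j \ge 2$ for each $j$. By $1$-dependence, the indicators across distinct blocks are jointly independent, whence $\rho(A) = \prod_j \rho(I_j)$. On the determinant side, the vanishing of $K(x, y)$ for $x \ge y + 2$ forces $K(x, y) = 0$ whenever $x$ lies in a later block $I_b$ than $y \in I_a$ with $b > a$. Ordering the rows and columns of $(K(x, y))_{x, y \in A}$ by increasing $x$ therefore produces a block upper-triangular matrix with diagonal blocks $(K(x, y))_{x, y \in I_j}$, and its determinant factors as $\prod_j \det(K|_{I_j})$. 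It remains to prove $\det(K|_I) = \rho(I)$ for a single interval $I = [x, x + n - 1] \cap \Z$.

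For the interval case, expand $\det(K|_I)$ by Leibniz. Since $K(x + i - 1, x + j - 1) = 0$ for $j \le i - 2$, only permutations $\sigma$ of $\{1, \ldots, n\}$ with $\sigma(i) \ge i - 1$ contribute. A short check shows these are parameterized by compositions $(i_1, \ldots, i_r)$ of $n$: writing $l_0 = 0$ and $l_k = i_1 + \cdots + i_k$, the permutation $\sigma$ is the product of $r$ cycles, with the $k$-th cycle acting on $\{l_{k-1} + 1, \ldots, l_k\}$ by $l_{k-1} + 1 \mapsto l_k$ and $j \mapsto j - 1$ for $l_{k-1} + 2 \le j \le l_k$. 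The sign of $\sigma$ is $(-1)^{n-r}$, while the $i_k - 1$ subdiagonal entries $K(j, j - 1) = -1$ in block $k$ contribute another factor $(-1)^{n-r}$; after cancellation,
\begin{equation}
    \det(K|_I) = \sum_{(i_1, \ldots, i_r)} \prod_{k=1}^{r} K(x + l_{k-1}, \, x + l_k - 1),
\end{equation}
the outer sum running over all compositions of $n$. Substituting each factor $K(x + l_{k-1}, x + l_k - 1)$ by its defining alternating sum over compositions of $[x + l_{k-1}, x + l_k) \cap \Z$ expresses the right-hand side as a double sum over pairs (coarse composition, finer refinement). Reorganize by summing over the finer composition $(c_1, \ldots, c_s)$ of $n$ first: the compatible coarse compositions are in bijection with subsets of size $r - 1$ of the $s - 1$ interior break-points of $(c_1, \ldots, c_s)$, each carrying sign $(-1)^{s - r}$. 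The inner sum collapses to
\begin{equation}
    \sum_{r = 1}^{s} \binom{s - 1}{r - 1} (-1)^{s - r} = \delta_{s, 1},
\end{equation}
so only the trivial composition $s = 1$, $c_1 = n$ survives, contributing the single term $\rho([x, x + n) \cap \Z) = \rho(I)$, as desired.

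The main obstacle is the careful sign bookkeeping in the Leibniz expansion together with the combinatorial cancellation across refinements. Conceptually, the defining formula for $K$ is an inclusion-exclusion inverse of the run correlation function, and the determinantal identity is its matrix-level counterpart; in the stationary case the whole argument collapses to the compact generating function identity $\sum_{n \ge 1} K(x, x + n - 1) v^n = 1 - 1/P(v)$.
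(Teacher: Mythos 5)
Your argument is correct, but note that the paper itself offers no proof of this statement: Theorem \ref{thm:1dep_det_kernel} is imported verbatim from \cite{borodin2010adding}, so the only comparison available is with that source rather than with anything in this paper. Your verification is sound on its own terms. The block factorization is justified: a gap of at least one integer between consecutive runs of $A$ gives, by iterating the past/future independence in the definition of $1$-dependence, joint independence of the runs, hence $\rho(A)=\prod_j\rho(I_j)$, while $K(x,y)=0$ for $x\ge y+2$ makes the matrix block upper-triangular in the same decomposition. The interval case is also right: permutations of $[n]$ supported on or above the subdiagonal are indeed in bijection with compositions of $n$ (both families have $2^{n-1}$ elements and your cycle description realizes each composition), the sign of $\sigma$ and the product of the $i_k-1$ subdiagonal entries $-1$ each contribute $(-1)^{n-r}$ and cancel, and the final collapse $\sum_{r=1}^{s}\binom{s-1}{r-1}(-1)^{s-r}=0^{\,s-1}=\delta_{s,1}$ kills every refined composition except the trivial one, leaving $\rho(I)$. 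This is essentially the statement that the map sending the sequence of run correlations to $K$ is the inverse, under composition-indexed convolution, of the map defining $\rho$ on intervals; in the stationary case it is exactly the generating-function identity $G_k(v)P(v)=-1/v$ quoted later in Section \ref{sec:determinantal}. The route in \cite{borodin2010adding} is organized differently --- there the determinantal formula \eqref{eqn:pattern_det} for cylinder probabilities is established first by inclusion--exclusion and the kernel is then extracted from it --- whereas you verify $\rho(A)=\det(K|_A)$ directly from the Leibniz expansion; your version has the advantage of being self-contained and of making the composition-refinement cancellation explicit. One small presentational gap: you assert rather than prove the bijection between admissible permutations and compositions ("a short check shows"); since this is the one place a reader could stumble, it deserves the two-line counting argument.
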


In the stationary case, 
\begin{equation}
    \rho([a, b) \cap \Z) = \P(S_{b-a} = b-a) = p_{b-a},
\end{equation}
where $S_n$ and $p_n$ inherit the setup in Section \ref{sec:intro}. It is easy to see that then the kernel is also stationary, i.e.

\begin{equation}
    K(x, y) := k(y - x) = K(x + c, y + c), \qquad \forall c \in \Z.
\end{equation}

To better describe the kernel, consider the \textit{kernel generating function}

\begin{equation}
    G_k(v) := \sum_{n\in\Z} k(n) v^n = -v^{-1} + p_1 + (p_2 - p_1^2) v + \cdots
\end{equation}

Borodin, Diaconis and Fulman\cite[Corollary 7.3]{borodin2010adding} give the following relationship between the kernel generating function $G_k$ and the $1$-run probability generating function $P$

\begin{equation}
    G_k(v) P(v) = -\frac 1v.
\end{equation}

One last interesting (but not hard) result \cite[Theorem 4.1]{borodin2010adding} is that we may write the probability of any string pattern as a determinant. Consider $\XX = [n] := \{1, 2, \cdots, n\}$, then the string with exactly $k$ zeros at $0 < w_1 < w_2 < \cdots < w_k \le n$, which corresponds to the subset $A := \{w_1, w_2, \cdots, w_k\}^c$, has probability

\begin{equation}\label{eqn:pattern_det}
    \P(A) = \det ( p_{w_{j+1} - w_i - 1} ) _{0 \le i, j \le k},
\end{equation}
where $p_n = 0, \forall n < -1, p_0 = p_{-1} = 1$ and $w_0 = 0, w_{k+1} = n + 1$.

As shown in \cite{borodin2010adding}, this formula \eqref{eqn:pattern_det} is obtained by application of inclusion-exclusion formula on the correlation function.
Hence, we may recover our bivariate generating function \eqref{eqn:bgf_stat_1dep} as follows:

\begin{corollary} \label{thm:mult_pgf_det}
    The multivariate probability generating function of the indicators $I_i$ of location $i, i \in \XX = [n]$ is
    \begin{equation} \label{eqn:mult_pgf_det}
        G_{n} ({\bf z}) := G_{I_1, I_2, \cdots, I_n}(z_1, z_2, \cdots, z_n) = \det ( g_{i, j} ) _{0 \le i, j \le n},
    \end{equation}
    where $g_{i, j} = p_{j-i}, \forall i-j \neq 1$ and $g_{j+1, j} = p_{-1}-z_j = 1-z_j$.
\end{corollary}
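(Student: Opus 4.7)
The plan is to expand both sides of \eqref{eqn:mult_pgf_det} as sums over the ``zero pattern'' $W = \{i \in [n] : X_i = 0\}$ and observe that, after re-indexing, the two sums match term by term. Concretely, first partition the sample space by $W$ to write
\[
G_n(\mathbf{z}) = \sum_{W \subseteq [n]} \P\bigl(X_j = 0 \text{ for } j \in W,\ X_j = 1 \text{ for } j \notin W\bigr) \prod_{j \notin W} z_j,
\]
and invoke \eqref{eqn:pattern_det} to express each pattern probability as the $(|W|+1) \times (|W|+1)$ determinant $\det(p_{w_{j+1} - w_i - 1})$, with $w_0 := 0$ and $w_{k+1} := n+1$.

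Next, expand $\det(g_{i,j})_{0 \le i, j \le n}$ by multilinearity in the $n$ subdiagonal entries $g_{j+1, j} = 1 + (-z_j)$. Let $M$ denote the matrix in which each subdiagonal entry is restored to $p_{-1} = 1$, so that $M_{ij} = p_{j-i}$ throughout. The multilinear expansion yields a sum indexed by subsets $S \subseteq \{0, \ldots, n-1\}$ recording which columns pick up the $(-z_j)$-piece. For each such $S$, the columns $j \in S$ contain a single nonzero entry $-z_j$ at row $j+1$, so successive Laplace expansions along these columns produce $|S|$ sign factors of $-1$, which cancel the $|S|$ minus signs from the $(-z_j)$'s, giving
\[
\det(g_{i,j}) = \sum_{S \subseteq \{0, \ldots, n-1\}} \Bigl(\prod_{j \in S} z_j\Bigr) \det M^{(S)},
\]
where $M^{(S)}$ is the minor of $M$ obtained by deleting the rows $\{j+1 : j \in S\}$ and the columns $S$.

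Finally, setting $A := S + 1 \subseteq [n]$ (the ``ones'' positions) and $W := [n] \setminus A$ (the zeros), the rows remaining in $M^{(S)}$ are $\{0\} \cup W = \{w_0, w_1, \ldots, w_k\}$ and the columns remaining are $(W - 1) \cup \{n\} = \{w_1 - 1, \ldots, w_k - 1, w_{k+1} - 1\}$. Re-indexing both by $0, 1, \ldots, k$ in increasing order, the $(i, j)$-entry of $M^{(S)}$ becomes $p_{(w_{j+1} - 1) - w_i} = p_{w_{j+1} - w_i - 1}$, so $M^{(S)}$ \emph{coincides} with the matrix in \eqref{eqn:pattern_det} and $\det M^{(S)} = \P(\text{zeros at } W)$. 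Matching the two expansions term by term then completes the proof. The main obstacle is the sign bookkeeping in the multilinear-plus-Laplace step; once resolved, the identification with \eqref{eqn:pattern_det} is a direct verification, modulo a minor relabeling between the subdiagonal variables $z_j$ of $g$ (for $j = 0, \ldots, n-1$) and the pgf variables $z_1, \ldots, z_n$.
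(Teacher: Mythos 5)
Your proposal is correct and follows essentially the same route as the paper, which proves the corollary by multiplying \eqref{eqn:pattern_det} by $\prod_{i \in A} z_i$ and summing over all subsets $A \subseteq [n]$; your multilinear column splitting plus Laplace expansion is exactly the (correct) bookkeeping that justifies why that sum collapses to $\det(g_{i,j})$, a step the paper dismisses as ``straightforward.'' The sign count via $(-1)^{\sum_{j\in S}(j+1)+\sum_{j\in S}j}=(-1)^{|S|}$ and the row/column identification with $\{w_0,\dots,w_k\}$ and $\{w_1-1,\dots,w_{k+1}-1\}$ both check out, and the $z_j$ versus $z_{j+1}$ relabeling you flag is indeed just an indexing quirk in the statement.
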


\begin{corollary} \label{thm:pgf_det}
    The ordinary probability generating function of the number $S_{n}$ of ones in $\XX = [n]$ is
    \begin{equation} \label{eqn:pgf_det}
        G_{S_n} (z) = \det ( \hat p_{j - i} ) _{0 \le i, j \le n},
    \end{equation}
    where $\hat p_k = p_k, \forall k \neq -1$ and $\hat p_{-1} = p_{-1}-z = 1-z$.
\end{corollary}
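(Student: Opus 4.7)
The plan is to derive Corollary \ref{thm:pgf_det} as an immediate specialization of Corollary \ref{thm:mult_pgf_det} by collapsing all the variables $z_1, \ldots, z_n$ to a single variable $z$. The key observation is that since $S_n = I_1 + I_2 + \cdots + I_n$, we have
\begin{equation*}
z^{S_n} = z^{I_1} z^{I_2} \cdots z^{I_n},
\end{equation*}
so taking expectations gives $G_{S_n}(z) = G_n(z, z, \ldots, z)$, the diagonal specialization of the multivariate probability generating function.

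Next I would check that under this specialization the matrix in \eqref{eqn:mult_pgf_det} becomes exactly the matrix in \eqref{eqn:pgf_det}. The entries $g_{i,j}$ with $i - j \neq 1$ already equal $p_{j-i}$, which agrees with $\hat p_{j-i}$ because $\hat p_k = p_k$ for every $k \neq -1$. The subdiagonal entries $g_{j+1,j} = 1 - z_j$ at positions $(j+1, j)$ with $j - i = -1$ all collapse to $1 - z$ upon setting $z_j = z$, matching the prescription $\hat p_{-1} = 1 - z$. Hence
\begin{equation*}
G_{S_n}(z) = G_n(z, z, \ldots, z) = \det(g_{i,j})\big|_{z_j = z} = \det(\hat p_{j-i})_{0 \le i, j \le n},
\end{equation*}
which is the desired identity.

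There is essentially no obstacle: once Corollary \ref{thm:mult_pgf_det} is established, Corollary \ref{thm:pgf_det} is a one-line substitution. The only care required is bookkeeping of indices to confirm that the single off-band position $j - i = -1$ is precisely where the $z$-dependence enters, and that all other positions are unaffected.
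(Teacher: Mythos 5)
Your proposal is correct and coincides with the paper's own argument: the paper likewise obtains Corollary \ref{thm:pgf_det} by setting all $z_i = z$ in \eqref{eqn:mult_pgf_det}, using $z^{S_n} = \prod_i z^{I_i}$. Nothing further is needed.
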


The proof to Corollary \ref{thm:mult_pgf_det} is quite straightforward: just multiply \eqref{eqn:pattern_det} by $\prod_{i \in A} z_i$ and then sum it up over all subsets $A \subseteq [n]$.
Corollary \ref{thm:pgf_det} is then obvious by considering the ordinary generating function of $S_n = \sum_{i \in [n]} I_i$, i.e. treat all $z_i$ in \eqref{eqn:mult_pgf_det} as $z$.
We are also aware of the following known determinantal generating function formula for any $1$-dependent process (not necessarily stationary)
\begin{equation}
    G_{S_n}(z) = \det(I+(z-1)K),
\end{equation}
where $I$ is the identity matrix and $K$ is the determinantal correlation kernel given by Theorem \ref{thm:1dep_det_kernel}. This is not easy to simplify even for the stationary case. But from hindsight, one may check that this is essentially equivalent to \eqref{eqn:pgf_det}.
Lastly, one may also show Theorem \ref{thm:bgf_stat_1dep} by the Laplace expansion of the last column of the determinant on the right of \eqref{eqn:pgf_det}.

\section{Enumeration of sequences} \label{sec:goulden_jackson}

We have derived our main Theorem \ref{thm:bgf_stat_1dep} 
from a probabilistic point of view, without assuming the sequences to be $2$-block factors. But its enumerative Corollary \ref{thm:counting_cor_2bf} on integer-valued $2$-block factors can be deduced from a combinatorial result in Goulden and Jackson \cite[Section 4]{goulden2004combinatorial}.

Recall \eqref{eqn:2bf}, we defined $2$-block factors on $V$ based on the background sequence $(Y_1, Y_2, \ldots)$ as indicators of adjacent pairs $(Y_i, Y_{i+1})$ falling in some subset $B \subseteq V^2$, which obviously is a $1$-dependent indicator process. For example, in section \ref{sec:descents}, we discussed the case where $V = [0, 1]$ and $B = \{ (x, y) : 0 \le y \le x \le 1\}$.

Say a string on $\Z_+$ is a \textit{$B$-string} if each adjacent pair belongs to $B$. Suppose further that the background sequence $(Y_1, Y_2, \ldots)$ has i.i.d. uniform distribution on $[m] = \{1, 2, \cdots, m\}$. Then, the $1$-run probability $p_k$ can be used to count the number $m_k$ of $B$-strings of length $k$, i.e.
\begin{equation}
    m_k = m^k p_{k-1}, \quad k \ge 1,
\end{equation}
and $m_0 = 1$ by convention. Define the \textit{$B$-string generating function} as
\begin{equation}
    G(v) = \sum_{k=0}^\infty m_k v^k = 1 + \sum_{k=1}^\infty p_{k-1} m^k v^k = 1 + mv P(mv),
\end{equation}
where $P(v)$ is the $1$-run probability generating function.
Now the following corollary follows from Theorem \ref{thm:bgf_stat_1dep} by elementary algebra.

\begin{corollary} \label{thm:counting_cor_2bf}
    For $V = [m]$, the number of sequences of length $n \ge 1$ on $V$ with exactly $k \in [0, n-1]$ occurrences of pairs of adjacent components in $B$ is
    \begin{equation} \label{eqn:counting_bgf_2bf}
        [z^k v^n] \frac {z - 1} {z - G ((z-1) v)}.
    \end{equation}
\end{corollary}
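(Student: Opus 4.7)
The plan is to translate the combinatorial count into a probabilistic quantity already handled by Theorem \ref{thm:bgf_stat_1dep}, and then express everything through $G$ rather than $P$ using the identity $G(v)=1+mvP(mv)$ that is already set up in the text. Concretely, drawing the background sequence $(Y_1,\ldots,Y_n)$ uniformly from $[m]^n$ and letting $S_{n-1}=X_1+\cdots+X_{n-1}$ for the associated $2$-block factor $X_i:=\mathbf{1}((Y_i,Y_{i+1})\in B)$, the number of sequences of length $n$ with exactly $k$ $B$-pairs among adjacent components equals $m^n\,\mathbb{P}(S_{n-1}=k)$. Hence the desired bivariate enumerating function is
\begin{equation*}
\sum_{n\ge 1}\sum_{k=0}^{n-1} m^n\,\mathbb{P}(S_{n-1}=k)\,z^k v^n \;=\; mv\cdot Q(z,mv).
\end{equation*}

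Next I would invoke the shifted form of Theorem \ref{thm:bgf_stat_1dep}, namely the right-hand equality in \eqref{eqn:shifted_bgf_stat_1dep},
\begin{equation*}
\tilde Q(z,v)=\frac{1-z}{\tilde P(-(1-z)v)-z},
\end{equation*}
and substitute $v\mapsto mv$. Since $\tilde P(v)=1+vP(v)$, the definition of $G$ rewrites as $\tilde P(mv)=G(v)$, so $\tilde P(-(1-z)mv)=G(-(1-z)v)=G((z-1)v)$. Plugging this in and clearing the sign gives
\begin{equation*}
\tilde Q(z,mv)=\frac{1-z}{G((z-1)v)-z}=\frac{z-1}{z-G((z-1)v)}.
\end{equation*}

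Finally I would reconcile the shift: by \eqref{eqn:shifted_run_pgf} (and the analogous definition for the bivariate shifted generating function in Section \ref{sec:examples}), $\tilde Q(z,mv)=1+mv\,Q(z,mv)$, so
\begin{equation*}
\frac{z-1}{z-G((z-1)v)}\;=\;1+mv\,Q(z,mv).
\end{equation*}
The constant $1$ contributes only to the $[z^0 v^0]$ coefficient, and for every $n\ge 1$ and $0\le k\le n-1$ the extraction $[z^k v^n]$ picks up $[z^k v^n]\,mv\,Q(z,mv)=m^n\,\mathbb{P}(S_{n-1}=k)$, which is the claimed count. Since the algebra is mechanical once the substitution $v\mapsto mv$ is made, there is no real obstacle; the only point requiring care is the bookkeeping between the unshifted $Q,P$ and the shifted $\tilde Q,\tilde P,G$, together with the off-by-one shift in $n$ caused by the fact that a length-$n$ sequence has $n-1$ adjacent pairs.
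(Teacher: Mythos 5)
Your proposal is correct and follows essentially the same route the paper intends: the paper sets up exactly the ingredients you use (the identification of the count as $m^n\,\mathbb{P}(S_{n-1}=k)$ via $m_k=m^kp_{k-1}$, the relation $G(v)=1+mvP(mv)=\tilde P(mv)$, and the $P$-form of Theorem \ref{thm:bgf_stat_1dep}) and then declares the rest ``elementary algebra,'' which is precisely the substitution $v\mapsto mv$ and shift bookkeeping you carry out. Your explicit verification of the off-by-one shift and the harmless constant term is a faithful filling-in of that omitted algebra, not a different argument.
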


To see the connection between Corollary \ref{thm:counting_cor_2bf} and \cite{goulden2004combinatorial}, we need the following definition from \cite{goulden2004combinatorial}. Let the \textit{$B$-string of length $k$ enumerator} be

\begin{equation}
    \gamma_k({\bf v}) = \gamma_k(v_1, v_2, \cdots) := \sum_{|\sigma| = k} \prod_i v_i ^ {\tau_i (\sigma)},
\end{equation}
where the sum is over all $B$-string $\sigma$ of length $k$ and $\tau_i(\sigma)$ is the number of times that component $i \in \Z_+$ appears, with $v_i$ being the counting variable associated with $i$.

Using the $B$-string of length $k$ enumerator, the following result for enumerating sequences with a certain number of occurrences of $B$ is shown in \cite{goulden2004combinatorial}:

\begin{theorem}[{\cite[Corollary 4.2.12]{goulden2004combinatorial}}] \label{thm:gj_section4}
    The number of sequences of length $n = \sum_i \tau_i$ with exactly $\tau_i$ $i$'s and $k (\le n-1)$ occurrences of pairs of adjacent components in $B$ is
    \begin{equation} \label{eqn:goulden_jackson_pi_1_seq}
        [z^k \prod_i v_i^{\tau_i}] \left\{ 1 - \sum_{j = 1}^ \infty (z-1)^{j-1} \gamma_j({\bf v}) \right\}^{-1}.
    \end{equation}
\end{theorem}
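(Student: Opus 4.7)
The plan is to prove the identity by an inclusion--exclusion style linearization of the marker $z^k$ followed by a free-concatenation decomposition. Writing
\[ z^{k(\sigma)} = (1 + (z-1))^{k(\sigma)} = \sum_{S \subseteq \mathrm{BAdj}(\sigma)} (z-1)^{|S|}, \]
where $\mathrm{BAdj}(\sigma) \subseteq \{1, \ldots, n-1\}$ indexes the positions $i$ with $(\sigma_i, \sigma_{i+1}) \in B$, and then interchanging the order of summation converts the target generating function into a sum over pairs $(\sigma, S)$ with $S$ an arbitrary subset of the B-adjacencies of $\sigma$, weighted by $\prod_i v_i^{\tau_i(\sigma)} (z-1)^{|S|}$.

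Next, I would decompose each such pair $(\sigma, S)$ according to the block structure induced by $S$: the chosen positions cluster into maximal runs that cut $\sigma$ into consecutive blocks $\sigma^{(1)}, \ldots, \sigma^{(m)}$, and a block of length $j \geq 1$ has all $j-1$ of its internal adjacencies in $S$, hence is a B-string. Summed over all B-strings of length $j$ with prescribed composition, such a block contributes $(z-1)^{j-1} \gamma_j({\bf v})$. The crucial observation is that each adjacency \emph{between} two successive blocks lies outside $S$ and is therefore unconstrained (it may or may not lie in $B$), so the blocks concatenate freely with no matching condition at the joints. Hence the generating function factors as $\sum_{m \geq 0} H(z, {\bf v})^m = (1 - H(z, {\bf v}))^{-1}$ where
\[ H(z, {\bf v}) := \sum_{j \geq 1} (z-1)^{j-1} \gamma_j({\bf v}), \]
with the $m = 0$ term accounting for the empty sequence; extracting the coefficient $[z^k \prod_i v_i^{\tau_i}]$ then yields the claimed count.

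The main obstacle is the bookkeeping of the free-concatenation claim: one must carefully verify that, as $S$ ranges over subsets of $\mathrm{BAdj}(\sigma)$ and $\sigma$ over all sequences of length $n$ on $\Z_+$, the resulting pairs $(\sigma, S)$ are in weight-preserving bijection with ordered tuples of B-strings of total length $n$, with no compatibility condition imposed between consecutive tuples. Once that is established, the rest is routine formal-power-series manipulation; the argument is essentially the Goulden--Jackson cluster method specialized to adjacency patterns of length two.
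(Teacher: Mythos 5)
The paper does not actually prove this statement: it imports it verbatim as \cite[Corollary 4.2.12]{goulden2004combinatorial} and only remarks afterwards how its specialization $v_1=v_2=\cdots=v$ matches Corollary \ref{thm:counting_cor_2bf}. So there is no in-paper argument to compare against, and your proposal should be judged on its own. It is correct, and it is essentially the classical derivation of this result: the linearization $z^{k(\sigma)}=\sum_{S\subseteq \mathrm{BAdj}(\sigma)}(z-1)^{|S|}$, followed by the observation that a pair $(\sigma,S)$ is the same data as an ordered tuple of $B$-strings concatenated with no compatibility condition at the joints (precisely because the joint adjacencies are merely required to be \emph{outside} $S$, not outside $B$), which turns the generating function into the geometric series $(1-H)^{-1}$ with $H=\sum_{j\ge1}(z-1)^{j-1}\gamma_j({\bf v})$. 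The bookkeeping you flag as the main obstacle is genuinely routine: cutting $\sigma$ at the positions of $\{1,\dots,n-1\}\setminus S$ and reassembling gives a weight-preserving bijection, and since $\gamma_1({\bf v})=\sum_i v_i$ has no constant term the geometric series is a well-defined formal power series, with the $m=0$ term covering the empty sequence. This is the Goulden--Jackson cluster method specialized to length-two patterns, so your route coincides with the standard source proof rather than offering a new one; the only thing worth adding for completeness is an explicit sentence verifying that the inverse map (concatenate the $B$-strings, let $S$ be the set of internal adjacencies) indeed lands in $\mathrm{BAdj}(\sigma)$, which holds because each block's internal adjacencies lie in $B$ by the definition of a $B$-string.
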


By setting $v_1 = v_2 = \cdots = v$, this becomes equivalent to \eqref{eqn:counting_bgf_2bf} since $\gamma_j(v{\bf 1}) = m_j v^j$. A more generalized version of Theorem \ref{thm:gj_section4}, known as the Goulden-Jackson cluster theorem, allows enumeration of adjacent components of any length (not just pairs), see \cite{goulden1979inversion,gessel2020application}.

In the case of descents, treated Section \ref{sec:descents}, the above discussion  does not apply directly, as since the background sequence is continuously distributed. However, this can be circumvented by considering the ranks in the partial sequence 
instead of the absolute values, hence it can be treated combinatorially as in \cite[Section 2.4.21]{goulden2004combinatorial} by considering only permutations of $\{1, 2, \cdots, n\}$ instead of all possible sequences.

An application of Corollary \ref{thm:counting_cor_2bf} can be found in \cite{florez2018further}.

\begin{theorem}[{$a = 4$: Florez \cite[Theorem 9]{florez2018further}}]
    The number of sequences on $V = \{0, 1, \cdots, a-1\}$ of length $n+m$ with exactly $m$ occurrences of adjacent pair $01$ is
    \begin{equation} \label{eqn:florez_example_generalized}
        f(n, m) = [x^m y^n]F(x, y) = [x^m y^n] \frac 1 {1 - (a+x)y + y^2}. 
    \end{equation}
\end{theorem}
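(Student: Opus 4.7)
The plan is to apply Corollary \ref{thm:counting_cor_2bf} directly with alphabet $V = \{0, 1, \ldots, a-1\}$ (of size $a$, which plays the role of the ``$m$'' in that corollary) and target set $B = \{(0, 1)\}$. The ``$m$ occurrences of the adjacent pair $01$'' in Florez's theorem matches exactly the ``$k$ occurrences of pairs in $B$'' of the corollary, once we take the sequence length to be $n + m$.

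First I would compute the $B$-string generating function $G(v) = \sum_k m_k v^k$. A $B$-string is a sequence on $V$ with every adjacent pair equal to $(0, 1)$. Trivially $m_0 = 1$ and $m_1 = a$. A $B$-string of length two must be the single string $01$, giving $m_2 = 1$; and no $B$-string of length $\ge 3$ exists, since its middle entry would have to equal both $0$ and $1$. Hence $G(v) = 1 + a v + v^2$.

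Next I would substitute into Corollary \ref{thm:counting_cor_2bf} with $n+m$ in place of its sequence length and $k = m$:
\[
f(n, m) = [z^m v^{n+m}]\, \frac{z - 1}{z - G((z - 1) v)}.
\]
Expanding $G((z - 1) v) = 1 + a(z - 1) v + (z - 1)^2 v^2$ and factoring,
\[
z - G((z - 1) v) = (z - 1)\bigl( 1 - a v - (z - 1) v^2 \bigr),
\]
so the $(z - 1)$ in the numerator cancels and
\[
f(n, m) = [z^m v^{n + m}]\, \frac{1}{1 - a v + v^2 - z v^2}.
\]
This factorization is the one computational observation that must be noticed; everything after it is bookkeeping.

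Finally I would match this against Florez's $F(x, y) = (1 - (a + x) y + y^2)^{-1} = (1 - a y + y^2 - x y)^{-1}$. Expanding each as a geometric series in the ``$z v^2$'' and ``$x y$'' summand respectively,
\[
[z^m]\, \frac{1}{1 - a v + v^2 - z v^2} = \frac{v^{2 m}}{(1 - a v + v^2)^{m + 1}}, \qquad [x^m]\, F(x, y) = \frac{y^m}{(1 - a y + y^2)^{m + 1}},
\]
so extracting $[v^{n + m}]$ and $[y^n]$ respectively reduces both sides to $[v^{n - m}]\,(1 - a v + v^2)^{-(m + 1)}$ after renaming $v \leftrightarrow y$, which proves the claim. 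The only real obstacle is cosmetic: keeping the alphabet-size ``$m$'' in the statement of Corollary \ref{thm:counting_cor_2bf} disjoint from the occurrence-count ``$m$'' of Florez's theorem, and then spotting the cancellation that turns the two-variable generating function into the clean rational form on the right-hand side.
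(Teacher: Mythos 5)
Your proposal is correct and follows essentially the same route as the paper: apply Corollary \ref{thm:counting_cor_2bf} with $B=\{(0,1)\}$, obtain $G(v)=1+av+v^2$, and cancel the factor $z-1$ to reach $\left(1-av+v^2-zv^2\right)^{-1}$, which is Florez's $F$ under $v\to y$, $zv\to x$. The only (immaterial) difference is that you count the $B$-strings directly rather than first computing $P(v)=1+a^{-2}v$ and using $G(v)=1+mvP(mv)$, and you verify the change of variables by explicit coefficient extraction instead of by substitution.
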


To check this from Corollary \ref{thm:counting_cor_2bf}, note first that the $1$-run probability generating function is $P(v) = 1 + \frac 1 {a^2} v$, hence the $B$-string generating function is $G(v) = 1 + av + v^2$.
\eqref{eqn:florez_example_generalized} is then proved by substitutions $k \to m, n-k \to n, v \to y, zv \to x$.

\section{Comparison with other dependence structures} \label{sec:comparison}

\hspace{-0cm}
\begin{tabular}{ |c|c|c| }
 \hline
 \multicolumn{3}{|c|}{Table 2} \\
 \hline
 Dependence structure & $Q(z, v)$ in terms of $Q(v)$ & $Q(z, v)$ in terms of $P(v)$ \\
 \hline
 Stationary $1$-dependent & $\frac{ Q ( (1 - z) v  ) }{ 1 - z v Q ( (1 - z ) v ) }$ & $\frac{ P ( - (1 - z) v  ) }{ 1 - v P ( - (1 - z ) v ) }$ \\
 Exchangeable & $\frac { Q \left( \frac {(1-z)v} {1 - zv} \right) } {1 - zv}$ & $\frac { P \left( - \frac {(1-z)v} {1 - v} \right) } {1 - v}$ \\
 Renewal & $\frac {Q(v)} {1 - z(1 + (v-1)Q(v))}$ & N/A \\
 Stationary renewal & $\frac {- z + (z - v + (1-z)v Q'(0)) Q(v) } { z(v-1) + (1-z)v (Q'(0)-1) + z(v-1)^2 Q(v) }$ & N/A \\
 \hline
\end{tabular}

Apart from stationary $1$-dependent processes we have discussed in this article, there are several other dependence structures whose bivariate generating functions can be written in terms of run probability generating functions. It is interesting to 
compare these formulas, but no current theory seems to unify them. Some examples may even belong to more than one dependence structure, see Table 2 above and explanations below.

\paragraph{Exchangeable}

A sequence $\Xnvar$ is \textit{exchangeable} iff for each $n \ge 1$ and each permutations $\pi$ of $[n]$
\begin{equation}
    (X_{\pi(1)}, X_{\pi(2)}, \cdots, X_{\pi(n)}) \ed (X_1, X_2, \cdots, X_n).          
\end{equation}
Properties of exchangeable sequences were first studied by de Finetti \cite{de2017theory}. The i.i.d. example in Section \ref{subsec:iid} is also exchangeable.

\paragraph{Renewal and stationary renewal}

Consider the partial sum $S_n = X_1 + X_2 + \cdots + X_n$ of the sequence $\Xnvar$, and its inter-arrival times $T_1 = \min \{n : S_n = 1\}, T_k = \min \{n : S_n = k\} - T_{k-1}$. We say $\Xnvar$ is \textit{delayed renewal}, if the \textit{renewals} $T_2, T_3, \cdots$ are i.i.d., and the \textit{delay} $T_1$ is independent of $T_2, T_3, \cdots$. In particular, we say the indicator sequence is

\begin{enumerate}
    \item \textit{renewal conditional on $X_0 = 1$} or \textit{renewal}, if the delay $T_1$ has the same distribution as the renewals $T_2, T_3, \cdots$;
    \item \textit{stationary renewal}, if it is both stationary and delayed renewal.
\end{enumerate}

For these sequences it is not possible to recover the bivariate generating function from the $1$-run probability generating function $P(v)$, since in the delayed renewal case 
$$P(v) = 1 + q_1 ( v + r_1 v^2 + r_1^2 v^3 + \cdots )$$
 contains only the information about immediate renewals, which does not determine the distribution of $T_1$ or the $0$-run probabilities.

The bivariate generating formula is an indirect corollary of \cite[Theorem 3.5.4]{hunter1983mathematical}. See \cite{feller1957introduction} for introductions to renewal theory.

Three examples which appeared earlier in this article are both stationary $1$-dependent and stationary renewal: the indicator of two consecutive ones in Section \ref{subsec:discrete_2bf}, the case $b=2$ of carries when adding a list of digits in Section \ref{subsec:carries}, and the generalized Florez' example in Section \ref{sec:goulden_jackson}. All these three examples can be treated as indicators of time-homogeneous Markov chains visiting a particular state.

\bibliographystyle{amsplain}
\bibliography{Poly}

\end{document}